\theoremstyle{plain}
\newtheorem{theorem}{Theorem}[section]
\newtheorem{corollary}[theorem]{Corollary}
\newtheorem{proposition}[theorem]{Proposition}
\newtheorem{example}[theorem]{Example}
\numberwithin{equation}{section}
\theoremstyle{definition}
\newtheorem{definition}[theorem]{Definition}
\theoremstyle{remark}
\newtheorem{remark}[theorem]{Remark}
\newcommand{\R}{{\mathbb R}}
\newcommand{\intr}{\mathrm{int}}
\newcommand{\F}{{\mathbb F}}
\newcommand{\Prob}{\mathop{\mathbb{P}}\nolimits}
\newcommand{\E}{\mathop{\mathbb{E}}\nolimits}
\newcommand{\mmu}{\mathop {\bar{x}_{\mu}}}
\newcommand{\supp}{\mbox{supp}}
\title{The Wronskian parameterizes the class of diffusions with a given distribution at a random time}
\author{Martin Klimmek\thanks{M.Klimmek@warwick.ac.uk}\\
Department of Statistics, University of Warwick}
\date{\today}
\begin{document}

\maketitle

\let\thefootnote\relax\footnotetext{The author would like to thank David Hobson
for encouragement and helpful suggestions and Paavo Salminen and Yang Yuxin for helpful suggestions. Any errors are the author's.}

\begin{abstract}
We provide a complete characterization of the class of one-dimensional time-homogeneous diffusions consistent with a given law at an exponentially distributed time using classical results
in diffusion theory. To illustrate we characterize the class of diffusions with the same distribution as Brownian motion at an exponentially distributed time. 
\end{abstract}

\section{Introduction}
The aim of this article is to characterize the class of one-dimensional time-homogeneous diffusions with a given law at an exponentially distributed time. We show, for instance, that there is a one-parameter family of diffusion processes started at $0$ with the same law as Brownian motion at an exponentially distributed time. In general, given a probability distribution we find that consistent diffusions are parameterized by a choice of starting point and secondly by a choice of Wronskian. 

We use classical results due to Dynkin \cite{Dynkin} and Salminen \cite{Salminen} involving the $h$-transform (or Doob's $h$-transform) of a diffusion to provide necessary and sufficient conditions for a diffusion to have a given distribution at a random time. Previously, Cox, Hobson and Obloj  \cite{CoxHobsonObloj:2011} proved the existence of consistent diffusions when the first moment is finite. We recover the construction in \cite{CoxHobsonObloj:2011} as a canonical choice from the class of consistent diffusions.

The problem of constructing diffusions with a given law at a random time can be motivated as an inverse problem in finance. Suppose we are given European call option prices with a fixed expiry time for a continuum of strikes. It is a natural inverse problem to ask whether there exist models for the asset price process consistent with these call prices and desirable properties. This article is related to a particular solution of the inverse problem proposed by Carr and Cousot \cite{CarrCousot}. The idea in \cite{CarrCousot} is to construct gamma-subordinated martingale diffusions to fit call prices after scaling the gamma-subordinator to be exponentially distributed at the expiry time.  

It is well known (see Breeden and Litzenberger \cite{BreedenLitzenberger:78}) that knowledge of call prices is equivalent to knowledge of the marginal law of the underlying asset. The problem of constructing a gamma-subordinated martingale diffusion consistent with call options at a single maturity is therefore a version of the more general problem considered here. In this article we do not assume that the consistent process is a martingale, nor that the target distribution is centred and we recover a parameterized class of consistent diffusions rather than a particular consistent martingale diffusion.

The analogue problem of constructing diffusions with a given distribution at a deterministic time is considered by Ekstr{\"o}m, et. al. in 
\cite{EkHob:2011}. For a more general view on inverse problems see also Hobson \cite{Hobson:10}. 

Finally, this article is related to the inverse problem of constructing diffusions consistent with prices for perpetual American options or, more generally, with given value functions for perpetual horizon stopping problems, see Hobson and Klimmek \cite{HobsonKlimmek10}. As in this article, the underlying key idea in \cite{HobsonKlimmek10} is to construct consistent diffusions through the speed measure via the eigenfunctions of the diffusion.  

\section{Generalized diffusions and the $h$-transform}
Let $I \subseteq \R$ be a finite or infinite interval with a left endpoint $a$ and right endpoint 
$b$. Let $m$ be a non-negative, non-zero Borel measure on $\R$ with $I=\supp(m)$. Let $s: I \rightarrow \R$ be a strictly increasing and continuous function. Let  $B=(B_t)_{t \geq 0}$ be a Brownian motion started at $B_0=s(X_0)$ supported on a a filtration $\F^B=({\mathcal F}_u^B)_{u\geq 0}$ with local time process
$\{ L_u^z ; u \geq 0, z \in \R \}$. Define $\Gamma$ to be the
continuous, increasing, additive functional
\[\Gamma_u = \int_{\R} L_u^z m(dz),\]
and define its right-continuous inverse by
\[A_t = \inf \{u : \Gamma_u > t \}. \]
If $X_t = s^{-1}(B(A_t))$ then $X=(X_t)_{t \geq 0}$ is a one-dimensional regular diffusion with speed measure $m$ and scale function $s$ and $X_t \in I$ almost surely for all $t \geq 0$. 

Let $H_x=\inf\{u:X_u=x\}$. Then for $\lambda>0$ (see e.g. \cite{Salminen}),
\begin{equation} \label{eq:eigenfunction}
\xi_\lambda(x,y)=\E_{x}[e^{-\lambda H_y}]= \left\{\begin{array}{ll}
\frac{\varphi_\lambda(x)}{\varphi_\lambda(y)}  &\; x \leq y \\
\frac{\phi_\lambda(x)}{\phi_\lambda(y)} &\; x \geq y ,
\end{array}\right.
\end{equation}
where $\varphi_\lambda$ and $\phi_\lambda$ are respectively a strictly increasing and a strictly decreasing solution to the differential equation
\begin{equation} \label{eq:differentialsc}
\frac{1}{2} \frac{d}{dm} \frac{d}{ds} f = \lambda f.
\end{equation}
The two solutions are linearly independent with Wronskian 
$W_\lambda=\varphi_\lambda' \phi_\lambda-\phi_\lambda' \varphi_\lambda > 0$.
Recall that if a diffusion $X=(X_t)_{t \geq 0}$ is in natural scale, then the Wronskian $W_\lambda$ is a constant. In the smooth case, when $m$ has a density $\nu$ so that $m(dx)= \nu(x)dx$ and $s''$ is continuous,
(\ref{eq:differentialsc}) is equivalent to
\begin{equation} \label{eq:differentialsnice}
\frac{1}{2} \sigma^2(x) f''(x) + \alpha(x) f'(x) = \lambda f(x),
\end{equation}
where \[ \nu (x)=  \sigma^{-2}(x) e^{M(x)}, \ \ s'(x)=e^{-M(x)}, \  \
M(x)=\int_{0-}^x 2 \sigma^{-2} (z) \alpha(z) dz. \]

We will call the solutions to (\ref{eq:differentialsc}) the $\lambda$-eigenfunctions of the diffusion. We will scale the $\lambda$-eigenfunctions so that $\varphi_\lambda(X_0)=\phi_\lambda(X_0)=1$. 

The $\lambda$-eigenfunctions are well known to be  $\lambda$-excessive. We recall that a Borel-measurable function $h: I \rightarrow \R^+$ is 
$\lambda$-excessive if for all $x \in I$ and $t \geq 0$,
 $\E_x[e^{-\lambda t} h(X_t)] \leq h(x)$ and if $\E_x[e^{-\lambda t} h(X_t)] \rightarrow h(x)$ pointwise as $t \rightarrow 0$.

\begin{definition} \label{d:htransform}
Let $h$ be a $\lambda$-excessive function. The {\it h-transform} of a diffusion $X=(X_t)_{t \geq 0}$ is the diffusion $X^h=(X^h_t)_{t \geq 0}$ with transition function
\[P^h(t; x, A)=\frac{1}{h(x)} \int_A e^{-\lambda t} p(t;x,y) h(y) m(dy),\]
where $p$ is the transition density of $X$.
\end{definition}

By the following result due to Dynkin \cite{Dynkin} (see also Salminen \cite{Salminen} (3.1)), any diffusion $X$ can be transformed into a diffusion with a given marginal at an exponential killing time. Fix $\lambda > 0$ and let $X=(X_t)_{t \geq 0}$ be a diffusion with $\lambda$-eigenfunctions $\varphi_\lambda$ and $\phi_\lambda$. Let $T$ be an exponentially distributed random variable with parameter $\lambda$, independent of $X$. 

\begin{theorem} \label{t:Dynkin}
Given a probability measure $\mu$ on $[a,b]$ let 
\begin{equation} \label{eq:hrep}
h(x)=\int_{[a,b]} \frac{\xi_\lambda(x,y)}{\xi_\lambda(X_0,y)} \mu(dy).
\end{equation}
Then $\Prob(X_T^h \in dx)=\mu(dx)$. 
Conversely, let $h$ be a $\lambda$-excessive function with $h(X_0)=1$ and let $\gamma_X^h(dy)=\Prob(X_T^h \in dx)$. Then $h$ has the representation (\ref{eq:hrep})
with $\mu=\gamma_X^h$.
\end{theorem}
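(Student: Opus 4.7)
The plan is to recast everything in terms of the $\lambda$-Green function
$$r_\lambda(x,y)=W_\lambda^{-1}\varphi_\lambda(x\wedge y)\phi_\lambda(x\vee y),$$
and to exploit the basic identity $\Prob_{X_0}(X_T\in dy)=\lambda\, r_\lambda(X_0,y)\,m(dy)$ together with uniqueness of the Martin representation for $\lambda$-excessive functions.

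\textbf{Preliminary identification.} From the explicit form of $\xi_\lambda$ in (\ref{eq:eigenfunction}) and the product structure of $r_\lambda$ one immediately verifies that
$$\frac{\xi_\lambda(x,y)}{\xi_\lambda(X_0,y)}=\frac{r_\lambda(x,y)}{r_\lambda(X_0,y)},\qquad x,y\in I,$$
since in each of the cases $x\lessgtr y$, $X_0\lessgtr y$ the eigenfunction factors telescope. In particular $h(X_0)=\int\mu(dy)=1$, and (\ref{eq:hrep}) can be rewritten as a normalized Riesz potential $h(x)=\int_{[a,b]} r_\lambda(x,y)/r_\lambda(X_0,y)\,\mu(dy)$.

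\textbf{Forward direction.} By Definition~\ref{d:htransform} the Green kernel of $X^h$ is
$$R^h(x,dy)=\int_0^\infty P^h(t;x,dy)\,dt=\frac{h(y)\,r_\lambda(x,y)}{h(x)}\,m(dy),$$
so that the distribution of $X^h$ at the (sub-)exponential killing time built into $P^h$ takes the form
$$\gamma_X^h(dy)=\frac{\lambda\,h(y)\,r_\lambda(X_0,y)}{h(X_0)}\,m(dy).$$
Substituting the representation of $h$, swapping the order of integration, and invoking the resolvent identity reduces the claim $\gamma_X^h=\mu$ to the reproducing property of the Martin kernel $y\mapsto r_\lambda(\cdot,y)/r_\lambda(X_0,y)$. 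This is essentially a bookkeeping exercise.

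\textbf{Converse direction.} Given a $\lambda$-excessive $h$ with $h(X_0)=1$, I would appeal to the classical Riesz--Martin representation for $\lambda$-excessive functions of a one-dimensional diffusion. The normalized minimal $\lambda$-excessive functions are precisely the maps $y\mapsto\xi_\lambda(\cdot,y)/\xi_\lambda(X_0,y)$ for $y\in[a,b]$, with the endpoints corresponding to degenerate limits producing the eigenfunctions $\varphi_\lambda$ and $\phi_\lambda$. Uniqueness of the representing probability measure, combined with the forward direction, then identifies the representing measure with $\gamma_X^h$.

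\textbf{Main obstacle.} The forward implication is computational once the ratio $\xi_\lambda(x,y)/\xi_\lambda(X_0,y)$ is identified with a ratio of Green functions. The real difficulty lies in the converse: one needs to classify the minimal $\lambda$-excessive functions of $X$ and prove uniqueness of the Martin representation. These are exactly the ingredients supplied by Dynkin's original theorem \cite{Dynkin} (and its diffusion-theoretic presentation in Salminen \cite{Salminen}), so my strategy would be to cite those results rather than reprove the Martin boundary classification from scratch.
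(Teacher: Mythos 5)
First, note that the paper does not prove Theorem \ref{t:Dynkin} at all: it is quoted from Dynkin and Salminen, and the converse half of your sketch (classification of the minimal $\lambda$-excessive functions, uniqueness of the Martin representation, identification of the representing measure) is in substance just a citation of those same sources, which is legitimate but adds nothing beyond what the paper already does. Your kernel identification $\xi_\lambda(x,y)/\xi_\lambda(X_0,y)=r_\lambda(x,y)/r_\lambda(X_0,y)$ is correct. The part you actually attempt yourself, the forward direction, contains a genuine error.

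The formula $\gamma_X^h(dy)=\lambda\, h(y)\, r_\lambda(X_0,y)\, m(dy)/h(X_0)$ is false for general $\lambda$-excessive $h$: it tacitly assumes that $X^h$ is still killed at the uniform rate $\lambda$, whereas the $h$-transform changes the killing mechanism. The interior killing rate of $X^h$ is $\bigl(\lambda h-\tfrac12\tfrac{d}{dm}\tfrac{d}{ds}h\bigr)/h$, and the $\lambda$-harmonic part of $h$ (the component spanned by $\varphi_\lambda,\phi_\lambda$) corresponds to death by convergence to the endpoints $a,b$ — which your formula misses entirely, even though $\mu$ is allowed to charge $\{a\}$ and $\{b\}$. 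Concretely, take $\mu=\delta_c$ with $c$ interior, so $h(\cdot)=r_\lambda(\cdot,c)/r_\lambda(X_0,c)$; then $X^h$ is the killed diffusion conditioned to die at $c$ and $\gamma_X^h=\delta_c$, while your formula produces the diffuse measure $\lambda\, r_\lambda(y,c)\,r_\lambda(X_0,y)\,m(dy)/r_\lambda(X_0,c)$, which is not even a probability measure, since by the resolvent equation $\int\lambda\, r_\lambda(X_0,y)\,r_\lambda(y,c)\,m(dy)=-\lambda\,\partial_\lambda r_\lambda(X_0,c)\neq r_\lambda(X_0,c)$ in general. For the same reason the announced bookkeeping step — substitute the representation of $h$, swap integrals, invoke the resolvent identity — does not collapse to $\mu$. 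A correct forward argument either computes the transformed killing measure, giving death-position law $R^h(X_0,dy)\cdot\bigl(\lambda h-\tfrac12\tfrac{d}{dm}\tfrac{d}{ds}h\bigr)(y)/h(y)=\mu(dy)$ on the interior plus a separate boundary analysis, or uses the mixture decomposition of the $h$-process over the Martin kernels $k(\cdot,y)$ together with the fact that the $k(\cdot,y)$-transform dies at $y$ almost surely — which is precisely Dynkin's argument. Finally, be aware that $X^h_T$ in the statement must be read as the position of $X^h$ at its lifetime (for $h\equiv1$ that lifetime is exactly $T$); evaluating $X^h$ literally at an independent $\mathrm{Exp}(\lambda)$ time would give a factor $r_{2\lambda}$, not $r_\lambda$, and a defective law, so your expression sits inconsistently between the two readings.
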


The measure $\gamma_X^h$ in (\ref{eq:hrep}) is called {\it the representing measure for $h$}. It follows from Theorem \ref{t:Dynkin} that we can start with any diffusion $X$ on $[a,b]$ and construct a killed diffusion with a given representing measure via an $h$-transform. Thus, in principle, since the representing measure co-incides with the law of $X^h_T$, Dynkin's result solves the inverse problem of constructing diffusions with a given marginal at an exponentially distributed (killing) time. 

We will build on this observation to recover consistent diffusions using a characterization of a representing measure in terms of the $\lambda$-eigenfunctions.

\section{Characterizing consistent diffusions}
Without loss of generality, we will restrict the inverse problem to the class of diffusions in natural scale. If it is possible to construct a diffusion in natural scale consistent with a given law on $[a,b]$ at an exponential time then it follows that we can also construct consistent diffusions with non-trivial scale functions: Given a marginal density $\mu$ on $\R$, define a measure $\nu$ via $\nu(B)=\mu(s^{-1}(B))$ for Borel sets $B \subseteq \R$, where $s:[a,b]\rightarrow \R$ is an arbitrary scale function. Then if a diffusion $Y=(Y_t)_{t\geq0}$ in natural scale is consistent with $\nu$, the diffusion $X=s(Y)$ is consistent with $\mu$.

Recall the definition of the $h$-transform and observe that $h \equiv 1$ is a $\lambda$-excessive function for any $\lambda > 0$. 
We will call the $h$-transform corresponding to $h \equiv 1$ the $\lambda$-transform. The $\lambda$-transform of $X$ is equivalent to $X$ up to 
the exponential time $T \sim Exp(\lambda)$ at which time $X^1$ is killed, while $X$ remains on the state space $I$. Thus
\[X^1_t= \left\{\begin{array}{ll}
X_t  &\;  t \leq T \\
\Delta  &\; t > T,  
\end{array}\right.
\]
where $\Delta$ is the grave state of the killed diffusion $X^1$. Note that the transition density of $X^1$ is given by $q(t;x,y)=e^{-\lambda t} p(t; x,y)$, where $p(t;x,y)$ is the transition density of $X$. Other fundamental quantities are related similarly, for instance $\E_{X_0}[e^{-\lambda H_x}]=\Prob_{X_0}(H_x < T)=\Prob_{X_0}(\mbox{$X^1$ reaches $x$})$.  

We restate our inverse problem as follows. Given a probability measure $\mu$ on $[a,b]$, construct a diffusion $X=(X_t)_{t\geq0}$ such that for all $x \in [a,b]$
\begin{equation} \label{eq:1rep}
1=\int_{[a,b]} \frac{\xi_\lambda(x,y)}{\xi_\lambda(X_0,y)} \mu(dy),
\end{equation}
whence by Theorem \ref{t:Dynkin}, $X^1_T \sim \mu$. Since $X_{T} \equiv X^1_{T} \sim \mu$, the idea is to construct the class of consistent diffusions via the $\lambda$-eigenfunctions for which (\ref{eq:1rep}) holds. 

The following result is an elementary case ($h \equiv 1$) of Proposition (3.3) in Salminen \cite{Salminen}.
\begin{proposition} \label{p:Salminen}
Given a diffusion $X$, the representing measure $\gamma=\gamma^1_X$ is given by
\begin{eqnarray} \label{eq:gamma}
\gamma([a,x)) &=& \frac{\varphi_\lambda'(x-)}{W_\lambda}  \mbox{$\ \ \ $ for $a<x \leq X_0$}, \\ 
\gamma((x,b]) &=& \frac{-\phi_\lambda'(x+)}{W_\lambda}  \mbox{$\ \ \ $ for $X_0 \leq x <b$},
\end{eqnarray}
where $\varphi_\lambda$ ($\phi_\lambda$) are the increasing (decreasing) $\lambda$-eigenfunctions of $X$ and $W_\lambda$ is the Wronskian.
\end{proposition}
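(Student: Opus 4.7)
The plan is to identify the representing measure $\gamma$ as the law of $X_T$ for an independent exponential time $T$ of rate $\lambda$, then read off its tails using the classical formula for the $\lambda$-resolvent kernel of a natural-scale diffusion together with the eigenfunction ODE~(\ref{eq:differentialsc}).

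From the description of the $\lambda$-transform in the paragraph after Definition~\ref{d:htransform}, $X^1_T$ and $X_T$ agree in law on the state space $I$, so $\gamma(dy)=\Prob_{X_0}(X_T\in dy)$. Integrating out the exponential clock gives the resolvent representation
\[
\gamma(dy)\;=\;\lambda\,G_\lambda(X_0,y)\,m(dy),
\]
where $G_\lambda$ is the $\lambda$-Green function of $X$. For a natural-scale diffusion the classical expression reads $G_\lambda(x,y)=(2/W_\lambda)\,\varphi_\lambda(x\wedge y)\,\phi_\lambda(x\vee y)$, obtained by solving (\ref{eq:differentialsc}) on either side of $y$, matching values at $y$, and imposing the jump in flux produced by the $\delta_y$-source. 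Using the normalisation $\varphi_\lambda(X_0)=\phi_\lambda(X_0)=1$ one obtains $\gamma(dy)=(2\lambda/W_\lambda)\,\varphi_\lambda(y)\,m(dy)$ for $y\leq X_0$ and $\gamma(dy)=(2\lambda/W_\lambda)\,\phi_\lambda(y)\,m(dy)$ for $y\geq X_0$.

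The next step is to recognise $\varphi_\lambda\,m$ and $\phi_\lambda\,m$ as exact differentials: equation~(\ref{eq:differentialsc}) in natural scale integrates to $d\varphi_\lambda'(y)=2\lambda\,\varphi_\lambda(y)\,m(dy)$, and analogously for $\phi_\lambda$. Integrating over $[a,x)$ for $x\leq X_0$, and over $(x,b]$ for $x\geq X_0$, then yields
\[
\gamma([a,x))\;=\;\frac{\varphi_\lambda'(x-)-\varphi_\lambda'(a+)}{W_\lambda},\qquad \gamma((x,b])\;=\;\frac{\phi_\lambda'(b-)-\phi_\lambda'(x+)}{W_\lambda}.
\]

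The main obstacle is to justify that the boundary terms $\varphi_\lambda'(a+)$ and $\phi_\lambda'(b-)$ vanish. This rests on $\varphi_\lambda$ and $\phi_\lambda$ being the minimal increasing and decreasing $\lambda$-excessive solutions of (\ref{eq:differentialsc}); under Feller's boundary classification, at any non-absorbing endpoint this minimality forces the derivative to vanish, which is the regime relevant to the $h$-transform framework of Theorem~\ref{t:Dynkin}. As an internal check, taking $x=X_0$ in both expressions gives total mass $(\varphi_\lambda'(X_0-)-\phi_\lambda'(X_0+))/W_\lambda$, which evaluates to $W_\lambda/W_\lambda=1$ once $\varphi_\lambda(X_0)=\phi_\lambda(X_0)=1$ is inserted into the Wronskian, confirming that $\gamma$ is a probability measure as it should be.
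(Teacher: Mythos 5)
The paper itself gives no proof of this proposition (it is quoted as the $h\equiv 1$ case of Proposition 3.3 in Salminen \cite{Salminen}), so the only question is whether your argument stands on its own. Your route is the natural one, and the constants are right: $\gamma$ is the law of $X_T$, the resolvent identity $\gamma(dy)=\lambda G_\lambda(X_0,y)\,m(dy)$ with $G_\lambda(x,y)=(2/W_\lambda)\varphi_\lambda(x\wedge y)\phi_\lambda(x\vee y)$ is correct for the normalisation $\varphi_\lambda(X_0)=\phi_\lambda(X_0)=1$ (it reproduces the Brownian example of Section 5), and integrating $d\varphi_\lambda'=2\lambda\varphi_\lambda\,dm$, which is just (\ref{eq:differentialsc}) in natural scale, is exactly how the tails should be computed.

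The genuine gap is the boundary treatment. First, the identity $\gamma(dy)=\lambda G_\lambda(X_0,y)\,m(dy)$ describes $\gamma$ only on the interior $(a,b)$: when an endpoint is accessible, the law of $X_T$ may charge it (absorption, or sticky reflection when $0<m(\{a\})<\infty$), and these atoms are precisely what the half-open sets $[a,x)$ and $(x,b]$ in the statement are meant to capture. Second, your claim that $\varphi_\lambda'(a+)=0$ at ``any non-absorbing endpoint'' by minimality, and that this is ``the regime relevant'' here, is not justified and is false exactly in the cases that matter: nothing in the proposition or in Theorem \ref{t:Dynkin} excludes accessible endpoints (see Remark \ref{r:endpoint} and the reflecting uniform example). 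For Brownian motion on $[0,\infty)$ absorbed at $0$ one has $\varphi_\lambda(x)\propto\sinh(\sqrt{2\lambda}\,x)$, so $\varphi_\lambda'(0+)>0$, and correspondingly $\gamma(\{0\})=\E_{X_0}[e^{-\lambda H_0}]>0$; for a sticky endpoint the boundary condition reads $\varphi_\lambda'(a+)=2\lambda m(\{a\})\varphi_\lambda(a)\neq 0$ and again $\gamma(\{a\})>0$. In your write-up these two omissions cancel—you drop the boundary atom of $\gamma$ and you drop the boundary derivative—so the displayed formula happens to be correct, but your argument does not establish it. The missing step is the identity $\gamma(\{a\})=\varphi_\lambda'(a+)/W_\lambda$ (and its analogue at $b$), proved case by case: at an inaccessible or instantaneously reflecting endpoint both sides vanish; at a sticky endpoint $\gamma(\{a\})=\lambda G_\lambda(X_0,a)m(\{a\})$ combined with the flux condition gives it; at an absorbing endpoint $\varphi_\lambda(a+)=0$, so evaluating the constant Wronskian at $a$ gives $W_\lambda=\varphi_\lambda'(a+)\phi_\lambda(a)$, whence $\varphi_\lambda'(a+)/W_\lambda=1/\phi_\lambda(a)=\E_{X_0}[e^{-\lambda H_a}]=\gamma(\{a\})$. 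With that lemma inserted, your integration argument on the interior plus the boundary identity yields the stated formulas in full generality. (A minor further caveat: your closing ``total mass'' check uses $\varphi_\lambda'(X_0)-\phi_\lambda'(X_0)=W_\lambda$, which requires that $m$, equivalently $\mu$, has no atom at $X_0$; otherwise one-sided derivatives differ and $\gamma(\{X_0\})$ must be accounted for.)
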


\begin{remark} \label{r:endpoint}
Suppose that $a$ is accessible and that $X_0=a$. Then the representing measure for $h=1$ is given by $\gamma((x,b]) =\frac{-\phi_\lambda'(x+)}{W_\lambda}  \mbox{$\ $ for $a \leq x <b$}$ and similarly if $X_0=b$, where $b$ is accessible. 
\end{remark}

The characterization of the representing measure given by Proposition \ref{p:Salminen} will be used to arrive at our main result. Suppose we are given a probability measure $\mu$ on $[a,b]$. Let $U_\mu(x)=\int_{[a,b]} |x-y| \mu(dy)$, $C_\mu(x)=\int_{[a,b]} (y-x)^+ \mu(dy)$ and $P_\mu(x)=\int_{[a,b]}(x-y)^+ \mu(dy)$. Let $X=(X_t)_{t \geq 0}$ be a one-dimensional diffusion in natural scale and let $T$ be an independent exponentially distributed random variable with parameter $\lambda>0$.

\begin{theorem} \label{t:main}
Suppose $X_0 \in (a,b)$. Then $X_T \sim \mu$ if and only if the speed measure of $X$ satisfies
\[
m(dx)= \left\{\begin{array}{ll}
\frac{1}{2\lambda} \frac{\mu(dx)}{P_\mu(x)-P_\mu(X_0)+1/W_\lambda}  &\;  a < x \leq X_0 \\
\frac{1}{2\lambda} \frac{\mu(dx)}{C_\mu(x)-C_\mu(X_0)+1/W_\lambda}  &\; X_0 \leq x < b.  
\end{array}\right.
\]
where $W_\lambda>0$ is the Wronskian of $X$.
\end{theorem}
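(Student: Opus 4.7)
The plan is to invoke Proposition \ref{p:Salminen} together with the eigenvalue equation (\ref{eq:differentialsc}) to translate the requirement $X_T \sim \mu$, which is a condition on the derivatives of the eigenfunctions, into an explicit formula for the speed measure.

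Since $X$ is in natural scale we have $s(x)=x$, so (\ref{eq:differentialsc}) reads $\tfrac{1}{2} d\varphi_\lambda'(x) = \lambda \varphi_\lambda(x)\,m(dx)$ and similarly for $\phi_\lambda$, i.e.\ the eigenfunctions are determined by
\[
\varphi_\lambda'(dx) = 2\lambda\,\varphi_\lambda(x)\,m(dx), \qquad \phi_\lambda'(dx) = 2\lambda\,\phi_\lambda(x)\,m(dx),
\]
interpreted distributionally. The Wronskian $W_\lambda$ is constant (natural scale) and the normalization is $\varphi_\lambda(X_0) = \phi_\lambda(X_0)=1$.

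By Theorem \ref{t:Dynkin} applied with $h\equiv 1$, $X_T \sim \mu$ iff the representing measure $\gamma$ of $X$ equals $\mu$. Proposition \ref{p:Salminen} then says this is equivalent to
\[
\mu([a,x)) = \frac{\varphi_\lambda'(x-)}{W_\lambda} \ \ (a<x\le X_0),\qquad \mu((x,b]) = \frac{-\phi_\lambda'(x+)}{W_\lambda} \ \ (X_0\le x <b).
\]
Differentiating these identities and combining with the eigenvalue relations above gives on $(a,X_0]$ the identity $\mu(dx) = \varphi_\lambda'(dx)/W_\lambda = 2\lambda\, \varphi_\lambda(x)\,m(dx)/W_\lambda$, and analogously $\mu(dx) = -\phi_\lambda'(dx)/W_\lambda = 2\lambda\,\phi_\lambda(x)\,m(dx)/W_\lambda$ on $[X_0,b)$. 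Thus $m(dx) = \mu(dx)\big/\bigl(2\lambda\,\varphi_\lambda(x)/W_\lambda\bigr)$ on the left branch and similarly with $\phi_\lambda$ on the right.

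It remains to identify $\varphi_\lambda(x)/W_\lambda$ with $P_\mu(x)-P_\mu(X_0)+1/W_\lambda$ for $x\le X_0$. Having $\varphi_\lambda'(x)/W_\lambda = \mu([a,x))$, I integrate from $x$ to $X_0$ and use $\varphi_\lambda(X_0)=1$ to get
\[
\frac{\varphi_\lambda(x)}{W_\lambda} = \frac{1}{W_\lambda} - \int_x^{X_0} \mu([a,y))\,dy.
\]
A Fubini computation rewrites $\int_x^{X_0}\mu([a,y))\,dy = P_\mu(X_0)-P_\mu(x)$, producing the claimed denominator. The analogous argument with $-\phi_\lambda'(x+)/W_\lambda = \mu((x,b])$ and $\int_{X_0}^x \mu((y,b])\,dy = C_\mu(X_0)-C_\mu(x)$ handles the right branch. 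For the converse, starting from the displayed speed measure one constructs the eigenfunctions via the above integral formulas, verifies they solve (\ref{eq:differentialsc}) with Wronskian $W_\lambda$ and normalization $\varphi_\lambda(X_0)=\phi_\lambda(X_0)=1$, and reads off $\mu$ as the representing measure, invoking Theorem \ref{t:Dynkin} to conclude $X_T\sim\mu$.

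The main obstacle is bookkeeping for left/right limits and possible atoms of $\mu$ (which correspond to jumps of the eigenfunctions' one-sided derivatives in natural scale); the Wronskian identity $\varphi_\lambda'(X_0-)-\phi_\lambda'(X_0+)=W_\lambda$ combined with $\varphi_\lambda(X_0)=\phi_\lambda(X_0)=1$ reconciles the two branches at $X_0$ and shows no atom is lost, so the two pieces splice into a single measure on $(a,b)$.
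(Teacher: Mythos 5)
Your argument is correct and follows essentially the same route as the paper: Dynkin's theorem with $h\equiv 1$ plus Salminen's proposition identify $\mu([a,x))$ and $\mu((x,b])$ with $\varphi_\lambda'(x-)/W_\lambda$ and $-\phi_\lambda'(x+)/W_\lambda$, the eigenvalue equation turns this into the speed-measure formula, and integrating up to $X_0$ with the normalization $\varphi_\lambda(X_0)=\phi_\lambda(X_0)=1$ fixes the constants as $1/W_\lambda-P_\mu(X_0)$ and $1/W_\lambda-C_\mu(X_0)$, while your converse (build the candidate eigenfunctions from $P_\mu$, $C_\mu$, check they solve (\ref{eq:differentialsc}), and read off the representing measure) is exactly the paper's. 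The only difference is presentational: you work with the distributional form $\varphi_\lambda'(dx)=2\lambda\varphi_\lambda(x)\,m(dx)$ rather than second derivatives, which if anything handles atoms of $\mu$ a little more cleanly than the paper's statement.
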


\begin{proof}
Suppose first that $X_T \sim \mu$. Then since $X_T \equiv X^1_T$, by Theorem \ref{t:Dynkin} $\mu$ is the representing measure for $h \equiv 1$. Differentiating both sides of 
(\ref{eq:gamma}) we find that for all points $x$ such that $a < x \leq X_0$ and which are not atoms of $\mu$,
\[\mu(dx)= \frac{1}{W_\lambda} \varphi_\lambda''(x) dx.\]
(If $\mu$ has an atom at $x$ then $\mu(\{x\})=\frac{1}{W_\lambda} (\varphi_\lambda''(x+)-\varphi_\lambda''(x-))$. The case $x \geq X_0$ is similar, with $\phi_\lambda$ replacing
$\varphi_\lambda$. 

On the other hand, integrating the two sides of (\ref{eq:gamma}) we have
\begin{eqnarray*} 
P_\mu(x)+k_1 &=& \frac{\varphi_\lambda(x)}{W_\lambda}   \mbox{$\ \ \ $ for $x \leq X_0$}, \\ 
C_\mu(x)+k_2 &=& \frac{\phi_\lambda(x)}{W_\lambda}  \mbox{$\ \ \ $ for $x \geq X_0$},
\end{eqnarray*}
for constants $k_1,k_2 \in \R$. Now using the fact that 
$\varphi_\lambda(X_0)=\phi_\lambda(X_0)=1$ we find that $k_1=1/W_\lambda-P(X_0)$ and $k_2=1/W_\lambda-C(X_0)$. Since $\varphi_\lambda$ and $\phi_\lambda$ are the $\lambda$-eigenfunctions for $X$ and solutions to (\ref{eq:differentialsc}), the speed measure of $X$ satisfies
\[
m(dx)= \left\{\begin{array}{ll}
\frac{1}{2 \lambda} \frac{\varphi_\lambda''(x) dx}{\varphi_\lambda(x)} &\;  a < x \leq X_0 \\
\frac{1}{2 \lambda} \frac{\phi_\lambda''(x) dx}{\phi_\lambda(x)}  &\; X_0 \leq x < b.   \\
\end{array}\right.
\]

Substituting for $\varphi_\lambda$ and $\phi_\lambda$ we thus have
\[
m(dx)= \left\{\begin{array}{ll}
\frac{1}{2\lambda} \frac{\mu(dx)}{P_\mu(x)+ k_1} &\;  a < x \leq X_0 \\
\frac{1}{2\lambda} \frac{\mu(dx)}{C_\mu(x)+  k_2}  &\; X_0 \leq x < b   \\
\end{array}\right.
\]
as required.

Conversely suppose that $X$ has the given speed measure on $(a,b)$. Define a function $\eta: [a,b] \rightarrow \R^+$ as follows. Let $W_\lambda >0$ be the Wronskian associated with $X$ and set
\[
\eta(x)= \left\{\begin{array}{ll}
W_\lambda(P_\mu(x)-P_\mu(x_0))+1 &\;  a \leq x \leq X_0 \\
W_\lambda(C_\mu(x)-C_\mu(x_0))+1  &\; X_0 \leq x \leq b.   \\
\end{array}\right.
\]
Then $\eta$ solves (\ref{eq:differentialsc}) on the domain $(a,b)$ and we therefore have 
\[
\eta(x)= \left\{\begin{array}{ll}
\varphi_\lambda(x) &\;  a \leq x \leq X_0 \\
\phi_\lambda(x)  &\; X_0 \leq x \leq b.   \\
\end{array}\right.
\]
By Proposition \ref{p:Salminen} the representing measure for 
$h \equiv 1$ is given by

\begin{alignat*}{4}
\gamma([a,x)) &= \frac{\eta'(x-)}{W_\lambda} &=\mu([a,x)) & \mbox{$\ \ \ $ for $a<x \leq X_0$} \\ 
\gamma((x,b]) &= \frac{-\eta'(x+)}{W_\lambda} &= \mu((x,b]) & \mbox{$\ \ \ $ for $X_0 \leq x <b$},
\end{alignat*}
and it follows that $X_T \sim \mu$.
\end{proof}

\begin{remark}
If $X$ is started at an accessible end-point, $a$ say, then $X_T \sim \mu$ if and only if for all $x \in [a,b)$, $m(dx)=\frac{1}{2\lambda} \frac{\mu(dx)}{C_\mu(x)-C_\mu(a)+1/W_\lambda}.$ The case $X_0=b$ where $b$ is accessible is analogous. Compare Remark \ref{r:endpoint}.
\end{remark}

We have the following interpretation for the Wronskian.
\begin{corollary} \label{c:Wronskian}
If $X_T \sim \mu$ then the Wronskian satisfies
\[\left. \frac{W_\lambda}{2\lambda}= \frac{m(dz)}{\mu(dz)} \right|_{z=X_0}.\]
\end{corollary}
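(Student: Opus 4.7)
The plan is to read off the corollary directly from the formula for the speed measure in Theorem \ref{t:main} by evaluating at the point $z = X_0$. This is essentially a one-line consequence, so the proof sketch is short.

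First I would note that both branches of the formula for $m(dx)$ in Theorem \ref{t:main} are valid at the endpoint $x = X_0$ (the two cases overlap at this point and agree, as they must). In the first branch one has $P_\mu(X_0) - P_\mu(X_0) = 0$, and in the second branch $C_\mu(X_0) - C_\mu(X_0) = 0$, so in either case the denominator collapses to $1/W_\lambda$. Substituting gives
\[
m(dz)\big|_{z=X_0} \;=\; \frac{1}{2\lambda}\cdot\frac{\mu(dz)}{1/W_\lambda}\bigg|_{z=X_0} \;=\; \frac{W_\lambda}{2\lambda}\,\mu(dz)\big|_{z=X_0},
\]
which, after rearranging, is the claimed identity for the Radon--Nikodym derivative at $X_0$.

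There is no real obstacle here since Theorem \ref{t:main} has already done all of the work; the only thing worth flagging is the consistency check that the two branches of the formula agree at $X_0$, which justifies speaking of $m(dz)/\mu(dz)$ unambiguously at that point. One could also add a brief interpretive remark that, because $\mu(dz)/m(dz)\big|_{z=X_0}$ is intrinsic to the pair $(X,\mu)$, the Wronskian $W_\lambda$ is determined by the local ratio of the target law to the speed measure at the starting point.
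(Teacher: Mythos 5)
Your proposal is correct and matches the intended derivation: the paper treats the corollary as an immediate consequence of Theorem \ref{t:main}, obtained by evaluating the speed measure formula at $x=X_0$, where both branches reduce the denominator to $1/W_\lambda$. Your consistency check that the two branches agree at $X_0$ is a sensible (if implicit in the paper) addition, and nothing further is needed.
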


Intuition for Corollary \ref{c:Wronskian} is provided by the fact that 
$2/W_\lambda=\E_{X_0}[L^{X_0}_{A_T}]$ (see Lemma VI. 54.1 in Rogers and Williams \cite{rogers}).

\section{The Wronskian and the martingale property}

By Theorem \ref{t:main} the class of diffusions with a given starting point and a given law at an exponentially distributed time is parameterized by a choice of Wronskian. 
We will now see that when the first moment of the target law is finite there exists a unique consistent diffusion with the property that it is a martingale away from the boundary.

We will suppose for the remainder of this section that $\int_{[a,b]} |x| \mu(dx) < \infty$. Let $\mmu=\int_{[a,b]} x \mu(dx)$ and let us fix the starting point, $X_0=\mmu$.  Let $\tau \equiv \inf\{t \geq 0: X_t \notin \intr(I)\}$. Then $X^\tau=(X_{t \wedge \tau})_{t \geq 0}$ is a local martingale. In the following we will say that $X$ is a martingale diffusion whenever $X^\tau$ is a martingale. 

It follows from Theorem \ref{t:main} that $X$ is a diffusion with law $\mu$ at an exponentially distributed time $T \sim Exp(\lambda)$ if and only if \[m(dx)=\frac{1}{\lambda}\frac{\mu(dx)}{U_\mu(x)-|x-X_0|-2C_\mu(X_0)+2/W_\lambda}\]
for $a < x < b$. Given this formula for the consistent speed measures, the most natural choice of $W_\lambda$ is perhaps $W_\lambda=1/C_\mu(\mmu)$. Indeed, will see in Theorem \ref{t:local} below that this choice recovers the unique consistent martingale diffusion (it also recovers the construction in \cite{CoxHobsonObloj:2011}).

\begin{theorem} \label{t:local}
Suppose $X_0=\mmu$ and $a=-\infty$ or $b=\infty$. Then $X$ is a martingale diffusion consistent with $\mu$ if and only if $W_\lambda=1/C_\mu(\mmu)$. 
\end{theorem}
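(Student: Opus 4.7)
The plan is to combine Theorem \ref{t:main} (which gives the explicit speed measure of any consistent diffusion as a function of $W_\lambda$) with the standard criterion (Kotani / Delbaen--Shirakawa) that a one-dimensional natural-scale diffusion is a true martingale up to $\tau$ if and only if $\int^\infty x\,m(dx)=\infty$ at every boundary $b=\infty$, and symmetrically $\int_{-\infty}|x|\,m(dx)=\infty$ at $a=-\infty$. So the task is to compute these tail integrals as functions of $W_\lambda$ and identify the unique value for which they all diverge.

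First, for $x\geq \mmu$, Theorem \ref{t:main} gives $m(dx)=\mu(dx)/(2\lambda\phi_\lambda(x))$ with $\phi_\lambda(x)=1+W_\lambda(C_\mu(x)-C_\mu(\mmu))$. Since $C_\mu$ decreases monotonically from $C_\mu(\mmu)$ to $C_\mu(\infty)=0$ (using the finite first moment and $b=\infty$) and $\phi_\lambda\geq 0$ as a $\lambda$-excessive function, the admissible Wronskians satisfy $W_\lambda\leq 1/C_\mu(\mmu)$, with limit $\phi_\lambda(\infty)=1-W_\lambda C_\mu(\mmu)$. If $W_\lambda<1/C_\mu(\mmu)$ strictly, then $\phi_\lambda(x)\geq\phi_\lambda(\infty)>0$ everywhere, so $m(dx)\leq\mu(dx)/(2\lambda\phi_\lambda(\infty))$ and
\[
\int^\infty x\,m(dx)\leq\frac{1}{2\lambda\phi_\lambda(\infty)}\int^\infty x\,\mu(dx)<\infty,
\]
forcing $X$ to be a strict local martingale. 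The analogous bound using $\varphi_\lambda$ handles $a=-\infty$. This establishes the ``only if'' direction.

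For the ``if'' direction, set $W_\lambda=1/C_\mu(\mmu)$, so $\phi_\lambda(x)=C_\mu(x)/C_\mu(\mmu)$. The claim is that $\int_\mmu^\infty x\,\mu(dx)/C_\mu(x)=\infty$. Writing $x-\mmu=\int_\mmu^x dt$, Fubini together with the monotonicity of $C_\mu$ gives
\[
\int_\mmu^\infty (x-\mmu)\,\frac{\mu(dx)}{C_\mu(x)}=\int_\mmu^\infty dt\int_t^\infty \frac{\mu(dx)}{C_\mu(x)}\geq \int_\mmu^\infty \frac{\bar F_\mu(t)}{C_\mu(t)}\,dt,
\]
where $\bar F_\mu(t)=\mu((t,\infty))=-C_\mu'(t)$. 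The integrand is therefore $-(\log C_\mu)'(t)$, so the right side equals $\log[C_\mu(\mmu)/C_\mu(\infty)]=+\infty$, and this transfers to $\int^\infty x\,m(dx)=\infty$ (with a shift to $[\max(\mmu,0),\infty)$ if $\mmu<0$ to keep the integrand nonnegative). The $-\infty$ boundary is the mirror image, using $\varphi_\lambda(x)=P_\mu(x)/C_\mu(\mmu)$ together with $P_\mu(\mmu)=C_\mu(\mmu)$ and $P_\mu(-\infty)=0$. The main obstacle is verifying this divergence; the key trick is recognizing $\bar F_\mu/C_\mu=-(\log C_\mu)'$, which reduces a potentially delicate estimate to a transparent logarithmic blow-up driven by $C_\mu(\infty)=0$.
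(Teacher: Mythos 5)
Your proof is correct, and its two halves relate to the paper's differently. The ``only if'' half is essentially the paper's argument: positivity of the speed measure (equivalently nonnegativity of $\phi_\lambda$) forces $W_\lambda\le 1/C_\mu(\mmu)$ (the paper states this admissibility inequality with the direction reversed, a typo), and for $W_\lambda<1/C_\mu(\mmu)$ the denominator stays bounded away from zero near the infinite boundary, so $\int^\infty x\,m(dx)$ is comparable to $\int^\infty x\,\mu(dx)<\infty$ and Kotani's criterion rules out the martingale property. The ``if'' half is where you genuinely diverge: the paper verifies the Kotani integral $\int^\infty x\,C''_\mu(x)/C_\mu(x)\,dx=\infty$ probabilistically (an argument credited to Hobson: with $h(x)=xC''_\mu(x)/(2C_\mu(x))$ one identifies $D(x)=\E^x\left[\exp\left(-\int_0^{H_y}h(B_s)/B_s\,ds\right)\right]$ with $C_\mu(x)/C_\mu(y)$ via It\^o, lets $x\uparrow\infty$, and computes the expected additive functional), whereas you verify the same divergence analytically via Tonelli, the monotonicity bound $\int_t^\infty \mu(dx)/C_\mu(x)\ge \bar F_\mu(t)/C_\mu(t)$, and the identity $\bar F_\mu/C_\mu=-(\log C_\mu)'$, so the blow-up is just $\log C_\mu(\mmu)-\lim_{R\to\infty}\log C_\mu(R)=\infty$. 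Your route is shorter and more robust: it only uses that $C_\mu$ is convex, positive and decreasing to $0$ when $b=\infty$, so it handles atoms of $\mu$ without comment (the paper's use of $C''_\mu$ and It\^o tacitly assumes smoothness), at the cost of the probabilistic interpretation. The points worth making explicit are exactly the ones you gesture at: $C_\mu(x)>0$ for every finite $x$ because $b=\infty$; $\log C_\mu$ is locally absolutely continuous, so the logarithmic-derivative identity can legitimately be integrated; the passage from $\int(x-\mmu)\,\mu(dx)/C_\mu(x)=\infty$ to $\int^\infty x\,m(dx)=\infty$; and the mirror statement at $-\infty$ via $\varphi_\lambda$ and put--call parity $P_\mu(\mmu)=C_\mu(\mmu)$, which matches the paper's treatment.
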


The author would like to thank David Hobson for providing the proof that $\int^\infty \frac{x C''_\mu(x)}{C_\mu(x)} dx = \infty$ which is used below. 

\begin{proof}
We suppose $b=\infty$, the case $a=\infty$ is analogous. Since $m$ is positive, $W_\lambda \geq 1/C_\mu(\mmu)$. Suppose $W_\lambda>1/C_\mu(\mmu)$ then \[m(dx)=\frac{1}{\lambda}\frac{\mu(dx)}{U_\mu(x)-|x-\mmu|+c}\] for some $c>0$ and  
$\displaystyle \lim_{x \uparrow \infty} \frac{m(dx)}{\mu(dx)}=1/\lambda c$.
Thus $\int^{\infty} |x| m(dx) \propto \int^{\infty} |x|\mu(dx)< \infty$. %(Similarly, if $a=-\infty$, $\int_{-\infty} |x| m(dx)<\infty$.) 
It follows from Theorem 1 in Kotani \cite{Kotani} that $X$ is not a martingale diffusion.

Conversely suppose that $W_\lambda=1/C_\mu(\mmu)$. We will show that 
$\int^\infty \frac{x C_\mu''(x)}{C(x)} dx = \infty$. Write $h(x)=\frac{x C''(x)}{2 C(x)}$. For fixed $y$ and $x>y$, let $D(x)=\E^x\left[\exp\left(-\int_0^{H_y} \frac{h(B_s)}{B_s} ds \right)\right]$. Note that $D(y)=1$ and $D$ is positive and decreasing. Let $M_t=\exp\left(-\int_0^t \frac{h(B_s)}{B_s}\right) D(B_t)$. Then 
$M=(M_{t \wedge {H_y}})_{t \geq 0}$ is a bounded martingale. In particular, by It{\^o}'s formula,
$\frac{1}{2} D''(B_s)=\frac{h(B_s)}{B_s} D(B_s)$, so that 
$D(x)=\frac{C_\mu(x)}{C_\mu(y)}$. It follows that $\displaystyle \lim_{x \rightarrow \infty} D(x) =0$ and that \[\lim_{\substack{x \uparrow \infty \\ B_0=x}} \int_0^{H_y} \frac{h(B_s)}{B_s} ds = \infty\] almost surely. Then we must have
\begin{eqnarray*}
\infty &=&\lim_{\substack{x \uparrow \infty \\ B_0=x}} \E\left[\int_0^{H_y}\frac{h(B_s)}{B_s} ds \right] \\
&=& \lim_{x \uparrow \infty} \left\{\int_y^x \frac{h(z)}{z} (z-y) dz + \int_x^\infty \frac{h(z)}{z}(x-y) dz\right\} \\
&=& \int_y^\infty \frac{h(z)}{z}(z-y) dz,
\end{eqnarray*}
and thus $\int^\infty h(z) dz = \infty$. By Theorem 1 in \cite{Kotani}, $X$ is a martingale diffusion.
\end{proof}

\begin{remark}
An alternative proof of Theorem \ref{t:local} is available using a result
in Hulley and Platen \cite{HulleyPlaten}. By Theorem 1.2 and Proposition 2.2 in \cite{HulleyPlaten}, $X$ is a martingale diffusion if and only if
$\displaystyle \lim_{x \uparrow \infty} \phi_\lambda(x) = 0$. Now recall that since
$X$ is consistent with $\mu$ we have $\phi_\lambda(x)=W_\mu C_\mu(x) - W_\mu C_\mu(X_0) +1$ for $x \geq X_0$. Clearly  $\displaystyle \lim_{x \uparrow \infty} \phi_\lambda(x) = 0$ if and only if $W_\mu=1/C_\mu(X_0)$. 
\end{remark}

\section{Examples}
\begin{example} \label{ex:brownian}
Let $B=(B_t)_{t \geq 0}$ be Brownian motion. 
Then we find that $B_T \sim \mu_\lambda$, where for $x>0$
\[\mu_\lambda((x,\infty))=\mu_\lambda((-\infty,-x))=\frac{1}{2} e^{-\sqrt{2 \lambda} x}.\]
Let us recover the class of consistent diffusions started at $X_0=0$ with the 
same law at an exponential time as Brownian motion. The consistent
diffusions have speed measures $m_W(x)=\nu_W(x)dx$, where 
\[\nu_W(x)= \frac{e^{-\sqrt{2 \lambda} |x|}}{e^{-\sqrt{2 \lambda} |x|}-\sqrt{\lambda/2}+2\lambda/W}.\]
The choice $W=1/C(0)=2 \sqrt{2 \lambda}$ corresponds to Brownian motion. Any choice of $W \in (0,1/C(0))$ corresponds to a strict local martingale diffusion with the
same marginal law.  
\begin{figure}[ht!]\label{Fig.1}
\begin{center}
\includegraphics[height=4cm,width=10cm]{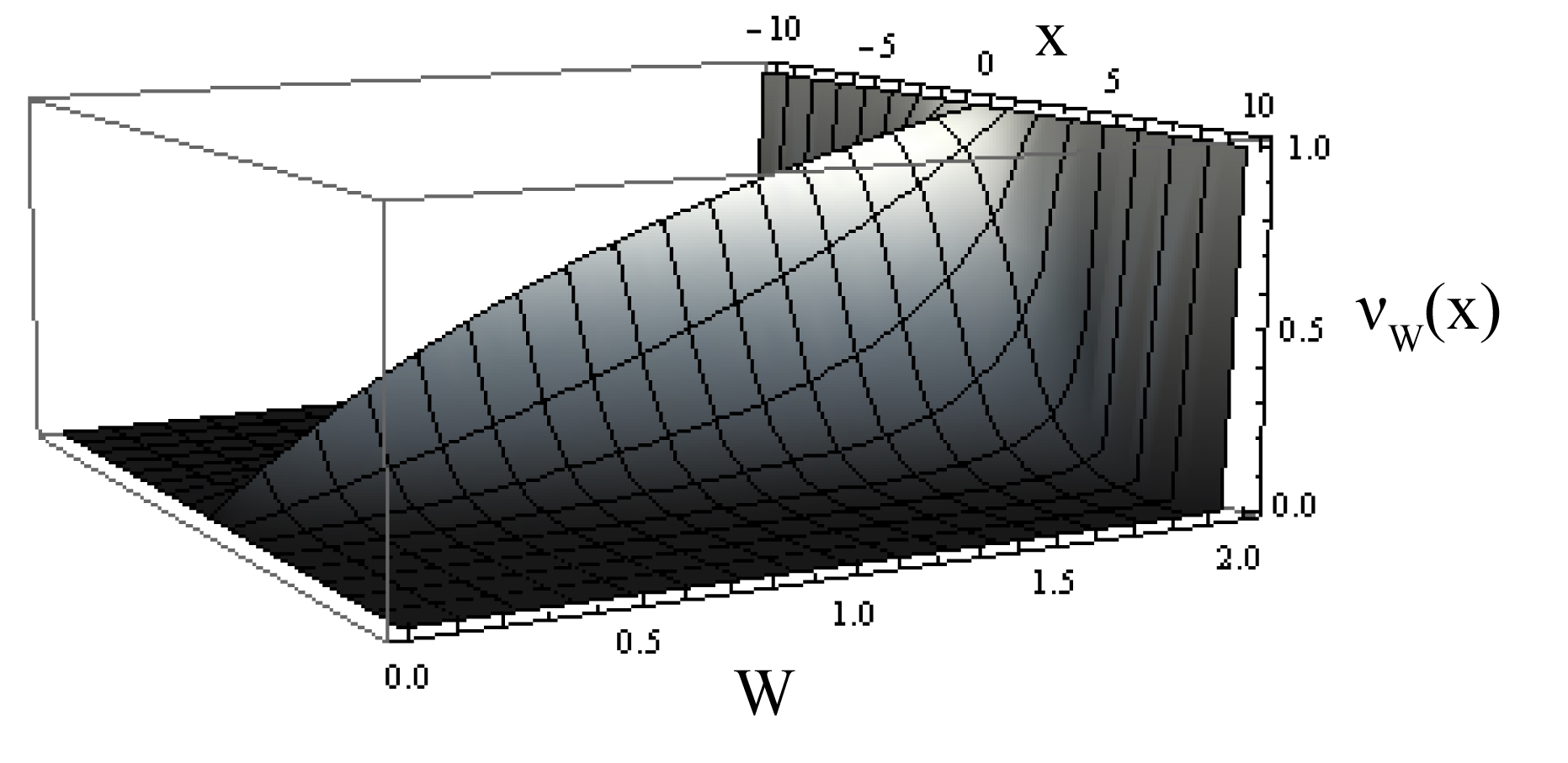}
\caption{Plot of $\nu_W(x)$ for $\lambda=1/2$ and $W \in (0,2]$. Note that 
$\nu_2(x) \equiv 1$ corresponds to Brownian motion which has Wronskian
$W=2\sqrt{2 \lambda}=2$.}
\end{center}
\end{figure}
\end{example}

\begin{example} \label{ex:uniform}
Suppose that $a=-1, b=1$ and we wish to recover diffusions started at $X_0=0$ that are uniformly distributed at an exponential time. We find that the consistent diffusions are parameterized by $W \in (0,4]$ with corresponding  speed measures $m_W(dx)=\nu_W(x)dx$ given by
\[
1/\nu_W(x)= \left\{\begin{array}{ll}
\lambda(x^2+2x+4/W) &\;  -1 \leq x \leq 0 \\
\lambda(x^2-2x+4/W)  &\; 0 \leq x \leq 1   \\
\infty &\; \mbox{otherwise}.
\end{array}\right.
\]

The canonical choice for $W$ is $1/W=C(0)=1/4$. Since $\nu_4(-1)=\nu_4(1)=\infty$ the boundary points are inaccessible whence $X^4_{T} \sim U(-1,1)$. For $W \in (0,4)$, the speed measure is finite on $[-1,1]$ and the consistent diffusions reflect at the boundaries and $X^W_{T} \sim U[-1,1]$.  

Now suppose $X_0=1/2$. Then
\[
1/\nu_W(x)= \left\{\begin{array}{ll}
\lambda(x^2+2x+1/4+4/W) &\;  -1 \leq x \leq 1/2 \\
\lambda(x^2-2x+9/4+4/W)  &\; 1/2 \leq x \leq 1   \\
\infty &\; \mbox{otherwise}.
\end{array}\right.
\]

\begin{figure}[ht!]\label{Fig.2}
\begin{center}
\includegraphics[height=5cm,width=8cm]{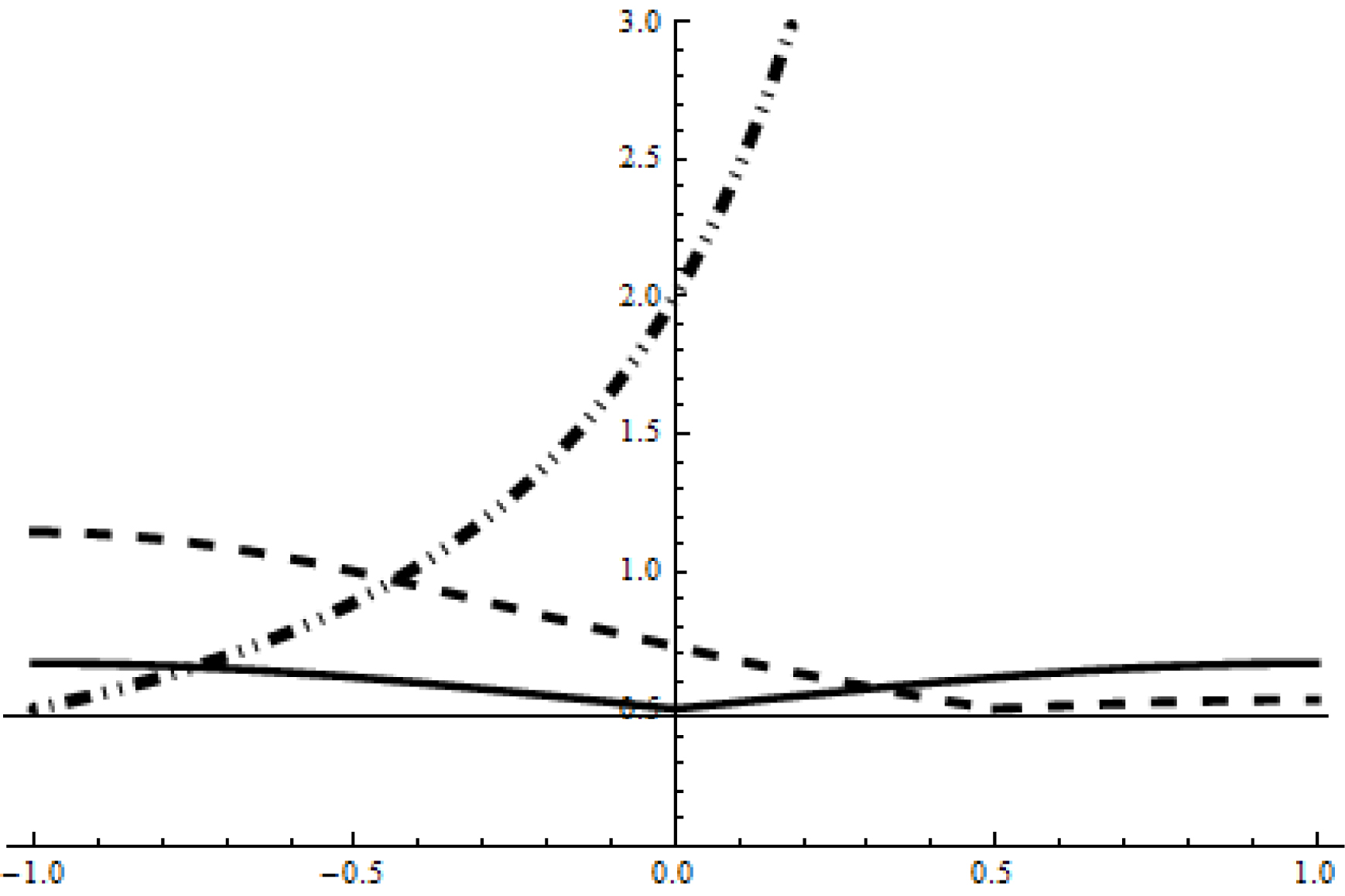}
\caption{Plot of $\nu_W(x)$ for $\lambda=1/2$ and $W=1$ when $X_0=0$ (solid line) and $X_0=1/2$ (dashed line), and $X_0=-1$ (alternating line). Note that 
$\nu_W(X_0)=\frac{W}{4\lambda}=1/2$, see Corollary \ref{c:Wronskian}.}
\end{center}
\end{figure}
\end{example}
%Furthermore we have the following relation between two measures centered at $X_0$ consistent with $X$ at exponentially distributed times.

%\begin{corollary}
%Let $\nu$ be a probability measure on $[a,b]$ with mean $X_0$. Let $S \sim Exp(\beta)$ with $0 <\beta \neq \lambda$ be independent of 
%$T$ and of $X$. Suppose \[\left. \frac{C_\mu(X_0)}{2\lambda}= \frac{\mu(dz)}{m(dz)} \right|_{z=X_0}.\]

% Then $X_S \sim \nu$ if and only if 
%\[\nu(dx)=\mu(dx) \frac{\beta}{\lambda} \frac{U_\nu(x)-|x-X_0|}{U_\mu(x)-|x-X_0|},\]
%for all $x \in (a,b)$.
%\end{corollary}
\bibliography{biblio}

\end{document}